\newtheorem{thm}{Theorem}
\newtheorem{lem}[thm]{Lemma}
\newtheorem{prop}[thm]{Proposition}
\theoremstyle{definition}
\newtheorem{defn}[thm]{Definition}
\theoremstyle{remark}
\newtheorem{rem}[thm]{Remark}
\newcommand{\BibTeX}{B\kern-0.1emi\kern-0.017emb\kern-0.15em\TeX}
\newcommand{\XYpic}{$\mathrm{X\kern-0.3em\raisebox{-0.18em}{Y}}$-$\mathrm{pic}\,$}
\newcommand{\cl}{C \kern -0.1em \ell}  
\DeclareMathOperator{\Erg}{Erg}
\newcommand{\OO}{\mathcal{O}}
\newcommand{\B}{\mathcal B}
\newcommand{\C}{\mathbb C}
\newcommand{\N}{\mathbb N}
\newcommand{\supp}{\text{supp}}
\newcommand{\ed}{\end{document}}
\begin{document}

%
%
%
%
%
%
%
%
%

\title
 {$L^p$-Cuntz algebras and spectrum of weighted composition operators}
 \author{Krzysztof Bardadyn}
 \address{Faculty of Mathematics, University of Bia\l ystok\\
   ul.~K.~Cio{\l}kowskiego 1M, 15-245\\
   Bia{\l}ystok, Poland} \email{kbardadyn@math.uwb.edu.pl}

\thanks{This work was supported by the National Science Center (NCN) grant no.~2019/35/B/ST1/02684. I thank Bartosz Kwa\'sniewski  
for his constant support and motivation, as well as careful reading of this manuscript and suggested improvements.}
%
%

\subjclass{47L10, 47L55, 47A10}
\keywords{$L^p$-operator algebras, Weighted composition operators, Transfer operators}

\begin{abstract}
  Let $p\in [1,\infty)$.  We define an $L^p$-operator algebra crossed
  product by a transfer operator for the topological Bernoulli shift
  $\varphi$ on $X=\{1,...,n\}^{\N}$, and we prove it is isometrically
  isomorphic to the $L^p$-analog $\OO_n^p$ of the Cuntz algebra
  introduced by Phillips.  As an application we prove that the
  spectrum of the associated `abstract weighted shift operators' $aT$,
  $a\in C(X)$, is a disk with radius given by the formula
  \begin{equation*}
    r(aT)=\max_{\mu\in\Erg(X,\varphi)} \exp\left( \int_X \ln
      (|a|\varrho^{\frac{1}{p}})d\mu + \frac{h_\varphi(\mu)}{p} \right)
  \end{equation*}
  where $\varrho$ is a potential associated to the transfer operator,
  and $h_\varphi(\mu)$ is Kolmogorov-Sinai entropy.  This generalizes
  classical results for $p=2$.
\end{abstract}
\label{page:firstblob}
\maketitle

\section{Introduction}
Cuntz algebras $\OO_n$, $n \geq 2$, play a predominant role in the
theory of $C^*$-algebras and provide important links to other branches
of mathematics such as dynamical systems, representation theory,
wavelet theory, and others, see e.g. \cite{BJ}. In \cite{PhLp1}
Phillips introduced $L^p$-analogues of Cuntz algebras $\OO_n^p$ that
are complex Banach algebras represents on $L^p$-spaces for
$p\in [1,\infty)$.  By definition $\OO_n^p$ is a Banach algebra which
is universal for \emph{$L^p$-Cuntz families}, i.e. families of
contractive operators $T_{i}, S_{i}$, $i=1,..., n$, acting on an
$L^p$-space $L^p_{\mu}(\Omega)$, for some measure space, satisfying
\begin{equation*}
\sum_{i=1}^{n} T_{i}S_{i}=1, \qquad S_iT_i=1\quad  \text{and} \quad  T_iS_i \text{ is hermitian},  \quad \text{for  $i=1,...,n$}.
\end{equation*}
In fact, for any such family, the Banach subalgebra $B(T_i, S_i:i=1,...,n)$ of $B(L^p(\mu))$ generated by $T_{i}, S_{i}$, $i=1,... n$ is naturally isometrically isomorphic to $\OO_n^p$,
and also $\OO_n^p$ is  simple and purely infinite, see~\cite{PhLp1}, \cite{PhLp2a}.  For $p=2$ we necessarily have $S_i=T_i^*$ and $\OO_n^2=\OO_n$
 is the standard Cuntz $C^*$-algebra~\cite{Cuntz}. For $p\neq 2$, operators  $T_{i}, S_{i}$ are necessarily \emph{spatial partial isometries} by generalized Lamperti's Theorem, see~\cite[Theorem 2.26]{BKM}. 

Cuntz $C^*$-algebras, and their generalizations, have natural models as crossed products by transfer operators (often called Exel's crossed product~\cite{Exel_crossed}), which give link Cuntz algebras with thermodynamical formalism. In this  note we define an $L^p$-operator algebra crossed product by a transfer operator, and we  apply this construction to spectral analysis of weighted composition operators in the context of  Cuntz-algebras. This generalizes the motivating example from \cite{bar-kwa}, to the case of Banach algebras and operators acting on $L^p$-spaces (rather than Hilbert spaces).


More specifically, let $(X,\varphi)$ be a \emph{topological Bernoulli shift} on the alphabet with $n$ letters, i.e. $X:=\{1,...,n\}^{\N}$ is a  Cantor space with product topology, 
and $\varphi(x_1,x_2,...):=  (x_2,x_{3}...)$ for $x=(x_1,x_2,...)\in X$. We fix a \emph{transfer operator} for $\varphi$, i.e. a positive operator $L:C(X)\to C(X)$ acting on the space of continuous functions on $X$  satisfying 
\begin{equation*}
L(b \cdot (a \circ \varphi))= L(b) a, \qquad  \text{ for  }a,b\in C(X).
\end{equation*} Then  it is necessarily  a \emph{Ruelle-Perron-Frobenius operator} of the form
\begin{equation*}
L(a)(y)=\sum_{x\in\varphi^{-1}(y)}\varrho(x)a(x), \qquad a\in C(X),
\end{equation*}
where $\varrho:X\to [0,+\infty)$ is a continuous function. One refers to $\varrho$ as to the \emph{potential} of the system. We will assume that $\varrho >0$ and $L$ is unital and so
$\sum_{x\in\varphi^{-1}(y)}\varrho(x)=1$ for all $x\in X$ (such operators are algebraically characterized in \cite[Proposition 2.13]{bar-kwa}). 
By
the Schauder-Tychonoff fixed point theorem, there is always  a probability Borel measure $\mu$ on $X$ such that $L^*(\mu)=\mu$ (when $\ln \varrho$ is H\"older continuous, such $\mu$ is unique and it is a \emph{Gibbs measure} for $\varphi$ and $\ln \varrho$). Then  $\mu$ is $\varphi$-invariant and its support is the whole $X$, see \cite[Proposition 3.8]{bar-kwa}. 
Therefore, for any $p\in [1,\infty)$  the composition operator $T_{\varphi} f= f\circ \varphi$ is an isometry on $L^p_{\mu}(X)$ 
and  we may identify elements of $C(X)$ with operators of multiplication acting on $L^p_{\mu}(X)$.
We write $\pi:C(X)\to  B(L^p_{\mu}(X))$ for the corresponding isometric representation.
We fix  $q\in (1,\infty]$ with $1/p+1/q=1$ (where $1/\infty=0$ by convention) 
and we put $X_{i}:=\{(i,x_2,x_3,...) \in X\}$ for $i=1,..., n$.
\begin{prop}\label{prop:motivation}
With the above notation,  $S_{\varphi}f(y):=\sum_{x\in\varphi^{-1}(y)}\varrho(x)f(x)$ defines a contractive operator on  $L^p_{\mu}(X)$ such that
\begin{equation}\label{eq:relations_concrete}
S_{\varphi}\pi(a)T_{\varphi} =\pi(L(a)), \ \ T_{\varphi}\pi(a)=\pi(a\circ\varphi)T_{\varphi},\qquad a\in C(X),
\end{equation}
and operators $T_{i}= \pi(\varrho^{-\frac{1}{p}}\mathds{1}_{X_i})T_\varphi$ and $S_i= S_{\varphi}\pi(\varrho^{-\frac{1}{q}}\mathds{1}_{X_i})$ form an $L^p$-Cuntz family.
The Banach algebra   $B(\pi(C(X)),T_\varphi,S_\varphi)$ generated by $\pi(C(X))$, $T_\varphi$, $S_\varphi$ is the Cuntz algebra $\OO_n^p$, i.e. $B(\pi(C(X)),T_\varphi,S_\varphi)=B(T_i, S_i:i=1,...,n)$.
\end{prop}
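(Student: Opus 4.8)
The plan is to verify the stated identities by essentially pointwise computations on $L^p_\mu(X)$, the only analytic inputs being $L^*\mu=\mu$ together with $\sum_{x\in\varphi^{-1}(y)}\varrho(x)=1$, and then to play the two algebra inclusions off against each other. The key preliminary is a change-of-variables formula: from $L^*\mu=\mu$ and the explicit form of $L$ one gets, for $h\in C(X)$,
\begin{equation*}
  \int_X h\,d\mu=\int_X L(h)\,d\mu=\int_X\sum_{i=1}^n\varrho(i,y)\,h(i,y)\,d\mu(y),
\end{equation*}
where $(i,y):=(i,y_1,y_2,\dots)$; in other words $(\varphi|_{X_i})_*(\mu|_{X_i})=\varrho(i,\cdot)\,\mu$, and $\varphi|_{X_i}\colon X_i\to X$ is a homeomorphism. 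Jensen's inequality for the probability weights $\{\varrho(x)\}_{x\in\varphi^{-1}(y)}$ gives $|S_\varphi f(y)|^p\le L(|f|^p)(y)$, and integrating against $\mu$ shows $\|S_\varphi f\|_p\le\|f\|_p$ (first for $f\in C(X)$, then by density), so $S_\varphi$ is a well-defined contraction on $L^p_\mu(X)$ — it is the $L^p$-extension of the Ruelle operator $L$. The two relations in \eqref{eq:relations_concrete} are then immediate: $T_\varphi\pi(a)f=(a\circ\varphi)(f\circ\varphi)=\pi(a\circ\varphi)T_\varphi f$, and $(S_\varphi\pi(a)T_\varphi f)(y)=\sum_{x\in\varphi^{-1}(y)}\varrho(x)a(x)f(y)=L(a)(y)f(y)$ by the transfer identity $L(b\cdot(a\circ\varphi))=L(b)a$.

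Next I would analyse $T_i$ and $S_i$. A direct computation gives $T_if=\varrho^{-1/p}\mathds{1}_{X_i}\cdot(f\circ\varphi)$ and $(S_if)(y)=\varrho(i,y)^{1/p}f(i,y)$ (using $1-1/q=1/p$, with the convention $\varrho^{-1/q}\equiv1$ when $p=1$). The change-of-variables formula then shows $\|T_if\|_p=\|f\|_p$ and $\|S_if\|_p\le\|f\|_p$, so the $T_i,S_i$ are contractive. Pointwise one checks $S_iT_i=1$ and $T_iS_i=\pi(\mathds{1}_{X_i})$; since $\sum_{i=1}^n\mathds{1}_{X_i}=\mathds{1}_X$ this yields $\sum_iT_iS_i=1$, and $\pi(\mathds{1}_{X_i})$ is hermitian because multiplication by a real bounded function on an $L^p$-space has norm-one exponentials. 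Thus $(T_i,S_i)_{i=1}^n$ is an $L^p$-Cuntz family, and by Phillips' theorem the algebra $B(T_i,S_i\colon i=1,\dots,n)$ is isometrically isomorphic to $\OO_n^p$.

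Finally I would prove $B(\pi(C(X)),T_\varphi,S_\varphi)=B(T_i,S_i\colon i=1,\dots,n)$. The inclusion $\supseteq$ is clear since $\varrho^{\pm1/p}\mathds{1}_{X_i}\in C(X)$. For $\subseteq$, first derive from \eqref{eq:relations_concrete} the commutation rules $T_i\pi(a)=\pi(a\circ\varphi)T_i$ and $\pi(a)S_i=S_i\pi(a\circ\varphi)$. For a word $\mu=(\mu_1,\dots,\mu_k)$ set $T_\mu=T_{\mu_1}\cdots T_{\mu_k}$, $S_\mu=S_{\mu_k}\cdots S_{\mu_1}$; an induction on $k$ using these rules and $T_jS_j=\pi(\mathds{1}_{X_j})$ gives $T_\mu S_\mu=\pi(\mathds{1}_{Z_\mu})$, where $Z_\mu=\{x:x_1=\mu_1,\dots,x_k=\mu_k\}$ is a cylinder. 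Since the locally constant functions $\operatorname{span}\{\mathds{1}_{Z_\mu}\}$ are dense in $C(X)$ (Stone–Weierstrass) and $\pi$ is isometric, $\pi(C(X))\subseteq B(T_i,S_i)$. Then $T_\varphi=\sum_{i=1}^n\pi(\varrho^{1/p}\mathds{1}_{X_i})\,T_i$ and $S_\varphi=\sum_{i=1}^nS_i\,\pi(\varrho^{1/q}\mathds{1}_{X_i})$ (again $\varrho^{1/q}\equiv1$ if $p=1$) show $T_\varphi,S_\varphi\in B(T_i,S_i)$, finishing the proof.

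The main obstacle I expect is the bookkeeping in the last paragraph: tracking the potential $\varrho$ through the commutation relations so that $T_\mu S_\mu$ comes out to be exactly $\pi(\mathds{1}_{Z_\mu})$ with no residual weights, and handling the degenerate exponent $1/q=0$ at $p=1$ uniformly. A second, minor, delicate point is justifying that $S_\varphi$, defined by a pointwise sum, descends to $\mu$-equivalence classes and is bounded; besides the Jensen estimate above, one can also identify $S_\varphi$ with the Banach-space adjoint of the isometry $T_\varphi$ on $L^q_\mu(X)$ under the pairing $\langle f,g\rangle=\int_X fg\,d\mu$, which makes boundedness transparent.
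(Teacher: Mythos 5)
Your proof is correct and follows essentially the same route as the paper: Jensen's inequality plus $L^*\mu=\mu$ for contractivity of $S_\varphi$, the Radon--Nikodym/change-of-variables identity for the isometry of $T_i$, pointwise verification of $S_iT_i=1$ and $T_iS_i=\pi(\mathds{1}_{X_i})$ (hermitian as real multiplication operators), and cylinder sets plus Stone--Weierstrass for $B(\pi(C(X)),T_\varphi,S_\varphi)=B(T_i,S_i)$. Incidentally, your formula $S_\varphi=\sum_i S_i\pi(\varrho^{1/q})$ is the correct one (the paper's proof writes $\varrho^{1/p}$ there, which appears to be a typo).
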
 
\begin{proof}
Using  Jensen's inequality and  $L^*(\mu)=\mu$, for any $f\in L^p_{\mu}(X)$ we get
\begin{equation*}
\|S_{\varphi}f\|^p= \int_{X}  \Big|\sum_{x\in\varphi^{-1}(y)}\varrho(x)f(x)\Big|^p\, d\mu\leq \int_{X} \sum_{x\in\varphi^{-1}(y)}\varrho(x)|f(x)|^p\, d\mu= \|f\|_{p}^{p}.
\end{equation*}
Hence $S_{\varphi}$ is well defined and contractive. Now the displayed relations \eqref{eq:relations_concrete} are easily verified. 
The restrictions $\varphi_i:=\varphi|_{X_i}:X_i\to X$, $i=1,...n$, are homeomorphisms and by \cite[Proposition 3.5]{bar-kwa} we have
 \begin{equation*}
\varrho(\varphi_i^{-1}(x))=\frac{d\mu\circ\varphi^{-1}_i}{d\mu}(x), \qquad x\in X.
\end{equation*}
We use it to show that $T_i= \pi(\varrho^{-\frac{1}{p}}\mathds{1}_{X_i})T_\varphi$ are isometries. 
 For any $f\in L^p(\mu)$,  using also abstract change of variables, we get
\begin{align*}
\|T_if\|_p^p
&=\int_{X_i}\varrho(x)^{-1}|f(\varphi_i(x))|^p d\mu=\int_{X_i}\varrho(\varphi_i^{-1}(x))^{-1}|f(x)|^p d\mu\circ\varphi_i^{-1} \\
&=\int_{X}|f(x)|^p  \frac{d\mu}{d\mu\circ\varphi^{-1}_i}(x) d\mu\circ\varphi_i^{-1}
=\|f\|^p.
\end{align*}
Let us note that $S_i f(x)=\varrho(\varphi^{-1}_i(x))^{\frac{1}{p}}f(\varphi^{-1}_i(x))$. Therefore we readily get that 
$S_iT_i=1$ and  $
T_iS_i= \pi(\mathds{1}_{X_i})
$. These are hermitian projections as mutliplication operators by real functions.  
In particular, $\sum_{i=1}^nT_iS_i=\sum_{i=1}^n  \pi(\mathds{1}_{X_i})=\pi(\mathds{1}_{X})=1$, 
and $S_i$ are contractions as  for any $f\in L^p_{\mu}(X)$, using that $T_i$ is an isometry, we get
\begin{equation*}
\|S_if\|_p=\|T_iS_if\|_p=\|\mathds{1}_{X_i}f\|_p\leq\|f\|_p.
\end{equation*}
Hence $T_{i}, S_{i}$, $i=1,..., n$, form an $L^p$-Cuntz family and so $B(T_i, S_i:i=1,...,n)\cong \OO_n^p$.
Clearly, we have $B(T_i, S_i:i=1,...,n)\subseteq B(\pi(C(X)),T_\varphi,S_\varphi)$, and thus we only need 
to prove the reverse inclusion. We first  show that $\pi(C(X))\subseteq B(T_i, S_i:i=1,...,n)$. 
For any $(i_1,...,i_N)\in\{1,...,n\}^N$ we have 
\begin{equation*}
T_{i_1}...T_{i_N}S_{i_N}...S_{i_1}=\pi(\mathds{1}_{X_{i_1,...,i_N}})\in B(T_i, S_i:i=1,...,n),
\end{equation*} 
where 
\begin{equation}\label{eq:basic_sets}
X_{i_1,...,i_N}=\{(i_1,...,i_N,x_{N+1},...)\in X\}.
\end{equation}
Since sets $X_{i_1,...,i_N}$ form a clopen base of $X$, the Stone–Weierstrass  theorem implies that $\pi(C(X))\subseteq B(T_i, S_i:i=1,...,n)$. 
Now we see that operators
\begin{equation*}
T_{\varphi}=\sum_{i=1}^n\pi(\varrho^\frac{1}{p})T_i,\qquad  S_{\varphi}=\sum_{i=1}^nS_i\pi(\varrho^\frac{1}{p}) 
\end{equation*}
are in $B(T_i, S_i:i=1,...,n)$. 
\end{proof}

The above proposition motivates the following definition. 
\begin{defn}
A \emph{covariant representation} of the transfer operator $L$ on a space $L^p_{\mu}(\Omega)$ for some measure space $(\Omega,\Sigma,\mu)$ we mean a triple $(\pi, T, S)$ where 
$\pi:C(X)\to B(L^p_{\mu}(\Omega))$ is a unital contractive homomorphism and $T, S\in B(L^p_{\mu}(\Omega))$ are contractive operators satisfying
\begin{equation}\label{eq:covariant_relation}
S\pi(a)T =\pi(L(a)),\qquad  T\pi(a)=\pi(a\circ\varphi)T, \qquad a\in C(X)
\end{equation}
and such that  $\pi(\varrho^{-\frac{1}{p}}\mathds{1}_{X_{i}})T$, $S\pi(\varrho^{-\frac{1}{q}}\mathds{1}_{X_i})$, $i=1,...,n$, are contractive and
\begin{equation}\label{eq:covariant_relation2}
\sum_{i=1}^{n} \pi(\varrho^{-\frac{1}{p}}\mathds{1}_{X_i})TS\pi(\varrho^{-\frac{1}{q}}\mathds{1}_{X_i})=1.
\end{equation}
We denote by $B(\pi(C(X)), T, S)$ the Banach subalgebra of $B(L^p_{\mu}(\Omega))$ 
generated by $\pi(C(X))$, $T$ and  $S$.
\end{defn}
For $p=2$ the above definition agrees with the one in \cite{Exel_crossed},  see \cite{kwa1}, \cite{BKL} and Remark \ref{rem:p=2} below.
For $p\neq 2$, as we explain in Section \ref{sec:two},  covariant representations are given by multiplication operators and weighted composition operators. 
 In Section \ref{sec:two_and_half}   we  prove the following theorem.
\begin{thm}\label{thm:first_result} 
For any (non-zero) covariant representation $(\pi, T, S)$ of $L$ on an $L^p$-space $L^p_{\mu}(\Omega)$, $p\in [1,\infty)$, 
the operators $T_{i}= \pi(\varrho^{-\frac{1}{p}}\mathds{1}_{X_i})T$ and $S_i= S_{\varphi}\pi(\varrho^{-\frac{1}{q}}\mathds{1}_{X_i})$ form an $L^p$-Cuntz family, and 
\begin{equation*}
B(\pi(C(X)), T, S)=B(T_i, S_i:i=1,...,n)\cong\OO_n^p.
\end{equation*}

\end{thm}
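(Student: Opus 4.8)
The plan is to reduce Theorem \ref{thm:first_result} to Proposition \ref{prop:motivation} by showing that the covariant relations \eqref{eq:covariant_relation}--\eqref{eq:covariant_relation2} force the operators $T_i = \pi(\varrho^{-1/p}\mathds{1}_{X_i})T$ and $S_i = S\pi(\varrho^{-1/q}\mathds{1}_{X_i})$ to satisfy the defining relations of an $L^p$-Cuntz family, after which Phillips' uniqueness theorem (quoted in the introduction) gives the isometric isomorphism with $\OO_n^p$ automatically, together with $B(T_i,S_i:i=1,\dots,n)\cong\OO_n^p$. So the work is entirely in verifying the three relations $\sum_i T_iS_i = 1$, $S_iT_i = 1$, and $T_iS_i$ hermitian, plus the two inclusions $B(\pi(C(X)),T,S) = B(T_i,S_i:i=1,\dots,n)$.

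First I would record the algebraic consequences of \eqref{eq:covariant_relation}. From $T\pi(a) = \pi(a\circ\varphi)T$ and the transfer identity $L(b\cdot(a\circ\varphi)) = L(b)a$ one gets, for instance, $S\pi(a\circ\varphi) = \pi(a)S\pi(\varrho\circ\text{-- })$-type relations; more usefully, since $\mathds{1}_{X_i}\cdot(\mathds{1}_X\circ\varphi) = \mathds{1}_{X_i}$ and the $\mathds{1}_{X_i}$ are orthogonal idempotents summing to $\mathds{1}_X$, the relation \eqref{eq:covariant_relation2} reads $\sum_i T_iS_i = 1$ directly by definition of $T_i,S_i$. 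For $S_iT_i = 1$: compute $S_iT_i = S\pi(\varrho^{-1/q}\mathds{1}_{X_i})\pi(\varrho^{-1/p}\mathds{1}_{X_i})T = S\pi(\varrho^{-1}\mathds{1}_{X_i})T$; using $L(\varrho^{-1}\mathds{1}_{X_i}) = \mathds{1}_X$ (this is exactly the statement that $\varrho$ is the potential and $\varphi|_{X_i}$ is a bijection onto $X$, so $L(\varrho^{-1}\mathds{1}_{X_i})(y) = \sum_{x\in\varphi^{-1}(y)}\varrho(x)\varrho(x)^{-1}\mathds{1}_{X_i}(x) = 1$) together with the first relation in \eqref{eq:covariant_relation} gives $S_iT_i = \pi(L(\varrho^{-1}\mathds{1}_{X_i})) = \pi(\mathds{1}_X) = 1$. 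For hermitian-ness of $T_iS_i$: since $\sum_i T_iS_i = 1$, each $S_iT_i = 1$, and $T_iS_iT_iS_i = T_i(S_iT_i)S_i = T_iS_i$, the $T_iS_i$ are idempotents summing to $1$; an idempotent $e$ in an $L^p$-operator algebra with $\|e\|\le 1$ (which we have, as $T_i,S_i$ are contractions by hypothesis) and $\|1-e\|\le 1$ is automatically hermitian — this is a standard fact about norm-one idempotents on $L^p$-spaces (a Banach-space version of the fact that contractive idempotents on $L^p$ are conditional-expectation-like), so I would cite the relevant lemma from \cite{BKM} or \cite{PhLp1}. Granting that, $(T_i,S_i)$ is an $L^p$-Cuntz family and $B(T_i,S_i:i=1,\dots,n)\cong\OO_n^p$ by the universal/uniqueness property stated in the introduction.

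It remains to identify the generated algebras. The inclusion $B(T_i,S_i:i=1,\dots,n)\subseteq B(\pi(C(X)),T,S)$ is immediate from the definitions of $T_i,S_i$. For the reverse inclusion I would repeat verbatim the argument in the proof of Proposition \ref{prop:motivation}: the products $T_{i_1}\cdots T_{i_N}S_{i_N}\cdots S_{i_1}$ equal $\pi(\mathds{1}_{X_{i_1,\dots,i_N}})$ (this uses only $S_jT_j = 1$, $T_iS_i\cdot T_jS_j = \delta_{ij}T_iS_i$, and the covariance identities, which hold abstractly), and since the cylinder sets $X_{i_1,\dots,i_N}$ form a clopen base for $X$, Stone--Weierstrass gives $\pi(C(X))\subseteq B(T_i,S_i:i=1,\dots,n)$; then $T = \sum_i \pi(\varrho^{1/p}\mathds{1}_{X_i})T = \sum_i \pi(\varrho^{1/p})T_i$ and similarly $S = \sum_i S_i\pi(\varrho^{1/q})$ lie in $B(T_i,S_i:i=1,\dots,n)$, closing the loop.

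The main obstacle I anticipate is the hermitian-ness of $T_iS_i$: one must be careful that ``hermitian'' here is in Phillips' sense (the relevant spectral/numerical-range condition for $L^p$-operator algebras, equivalently being a ``spatial'' idempotent of conditional-expectation type), and that the hypothesis in the definition of covariant representation — namely that $T_i$ and $S_i$ are \emph{contractive} and \eqref{eq:covariant_relation2} holds — is exactly enough to invoke the classification of contractive idempotents on $L^p_\mu(\Omega)$ (Lamperti-type, cf.\ \cite[Theorem 2.26]{BKM}) to conclude each $T_iS_i = \pi(\mathds{1}_{X_i})$-like spatial projection. Once that structural input is in hand, everything else is formal manipulation with the covariance relations plus the already-proven Proposition \ref{prop:motivation} and Phillips' uniqueness theorem.
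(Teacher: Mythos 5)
Your overall strategy coincides with the paper's: verify the three Cuntz relations, invoke Phillips' uniqueness theorem to get $B(T_i,S_i)\cong\OO_n^p$, and then identify the generated algebras via the cylinder-set projections $T_{i_1}\cdots T_{i_N}S_{i_N}\cdots S_{i_1}=\pi(\mathds{1}_{X_{i_1\ldots i_N}})$, Stone--Weierstrass, and the expansions $T=\pi(\varrho^{1/p})\sum_i T_i$, $S=\sum_i S_i\pi(\varrho^{1/q})$. The computations of $\sum_i T_iS_i=1$ and $S_iT_i=\pi(L(\varrho^{-1}\mathds{1}_{X_i}))=1$ are exactly as in the paper, and the algebra identification is fine.

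The genuine gap is in your argument that $T_iS_i$ is hermitian. First, you need $\|1-T_iS_i\|\le 1$, but $1-T_iS_i=\sum_{j\ne i}T_jS_j$ is a sum of $n-1$ contractions and there is no a priori reason for it to be contractive. Second, and more seriously, the ``standard fact'' you invoke is false: a contractive idempotent with contractive complement on an $L^p$-space need not be hermitian when $p\ne 2$. The bicontractive projections on $L^p$ are exactly those of the form $\tfrac12(1+U)$ with $U$ an isometric involution; e.g.\ on $\ell^p_2$ the averaging projection $e=\tfrac12\bigl(\begin{smallmatrix}1&1\\1&1\end{smallmatrix}\bigr)$ satisfies $\|e\|=\|1-e\|=1$ but is not hermitian for $p\ne2$ (hermitian operators on $\ell^p_2$, $p\ne2$, are the real diagonal matrices). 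So no amount of contractivity bookkeeping alone will close this step. The paper's route is purely algebraic and avoids all of this: from $\pi(\mathds{1}_{X_i})T_j=\pi(\mathds{1}_{X_i}\varrho^{-1/p}\mathds{1}_{X_j})T=\delta_{ij}T_j$ and $\sum_j T_jS_j=1$ one gets $\pi(\mathds{1}_{X_i})=\pi(\mathds{1}_{X_i})\sum_j T_jS_j=T_iS_i$, and then $T_iS_i$ is hermitian because it is the image of a real-valued function under the unital contractive homomorphism $\pi$ (\cite[Proposition 2.11]{BKM}). Your closing remark gestures toward $T_iS_i=\pi(\mathds{1}_{X_i})$ via a Lamperti-type classification, but that is both unnecessary and unavailable at $p=2$; the identity above is what you need, and it is elementary.
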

As an application we get the following description of spectra of 
abstract weighted shift operators  on $L^p$-spaces associated to the transfer operator $L$. 
The proof is given in Section \ref{sec:three}.

\begin{thm}\label{thm:second_result}  For any (non-zero) covariant representation $(\pi, T, S)$ of $L$ on an $L^p$-space
and for any $a\in C(X)$ the spectrum of $\pi(a)T$ is a disk
\begin{equation*}
\sigma(\pi(a)T)=\{ z\in \C: |z|\leq r(\pi(a)T)\}
\end{equation*}
and the spectral radius is given by the variational principle
\begin{equation*}
r(\pi(a)T)=\max_{\mu\in\Erg(X,\varphi)} \exp\left( \int_X \ln (|a|\varrho^{\frac{1}{p}})d\mu + \frac{h_\varphi(\mu)}{p} \right),
\end{equation*}
where $\Erg(X,\varphi)$ is the set of all $\varphi$-ergodic measures on $X$, and $h_\varphi(\mu)$ is the Kolmogorov-Sinai entropy of $\varphi$ with respect to $\mu$.
\end{thm}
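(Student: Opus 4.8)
The plan is to reduce the statement to a computation inside the universal algebra $\OO_n^p$, using Theorem~\ref{thm:first_result} to identify $B(\pi(C(X)),T,S)$ with $\OO_n^p$ and, crucially, to see that the spectrum of $\pi(a)T$ is the same in this Banach algebra as in $B(L^p_\mu(\Omega))$; here I would invoke simplicity of $\OO_n^p$ (so that the subalgebra has no proper closed ideals and spectral permanence holds, or directly that $\OO_n^p$ embeds isometrically and the spectrum is intrinsic). Since the identification is canonical and independent of the covariant representation, it suffices to compute $\sigma(\pi(a)T)$ and $r(\pi(a)T)$ in the concrete model of Proposition~\ref{prop:motivation}, i.e. for $\pi(a)T_\varphi$ acting on $L^p_\mu(X)$ with $\mu$ the fixed $L^*$-invariant (hence $\varphi$-invariant, full-support) measure.

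First I would establish the \emph{rotational invariance} of the spectrum, which yields the disk shape once we know $0\in\sigma(\pi(a)T)$ and that the spectral radius is attained. For any $\lambda\in\T$ there is a unitary (in the $L^p$ sense: an invertible isometry, coming from the gauge action on $\OO_n^p$ that rescales each $T_i$ by $\lambda$) implementing $\pi(a)T\mapsto \lambda\,\pi(a)T$; concretely on $L^p_\mu(X)$ one uses the unitary $U_\lambda f = \lambda^{k}f$ on the $k$-th cylinder level, or more cleanly the automorphism of $\OO_n^p$ sending $T_i\mapsto\lambda T_i$, $S_i\mapsto\lambda^{-1}S_i$, which is isometric and fixes $\pi(C(X))$. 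Hence $\sigma(\pi(a)T)=\lambda\,\sigma(\pi(a)T)$ for all $\lambda\in\T$, so the spectrum is a union of circles centred at $0$; since it is compact and nonempty, and since $\pi(a)T$ is not invertible (it has nontrivial kernel or cokernel because $T$ shifts the cylinder grading — e.g. $1\notin\overline{\mathrm{ran}}\,T$ when $n\geq 2$, as $S_iT=1$ forces range conditions), we get $0\in\sigma$, and therefore $\sigma(\pi(a)T)$ is the closed disk of radius $r(\pi(a)T)$.

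The heart of the proof is the \emph{spectral radius formula}. I would compute $r(\pi(a)T)=\lim_n\|(\pi(a)T)^n\|^{1/n}$. Using the covariance relation $T\pi(b)=\pi(b\circ\varphi)T$ repeatedly, one gets $(\pi(a)T)^n=\pi(a\cdot (a\circ\varphi)\cdots(a\circ\varphi^{n-1}))\,T^n=\pi(S_n a)\,T^n$ where $S_n a:=\prod_{k=0}^{n-1}a\circ\varphi^k$ is the Birkhoff product. Since $T^n$ on $L^p_\mu(X)$ is the composition isometry $f\mapsto f\circ\varphi^n$, and $\pi(g)$ is multiplication, the norm $\|\pi(g)T^n\|$ on $L^p_\mu(X)$ equals $\big(\int_X |g|^p\,d\mu\big)^{1/p}$ only when $\mu$ is $\varphi^n$-invariant and $T^n$ is an isometry — which it is — but the composition $\pi(g)T^n$ has norm $\sup\{\|g\cdot(h\circ\varphi^n)\|_p : \|h\|_p\le 1\}$, and because $\varphi^n$ is $n^n$-to-one one must push the mass of $|h\circ\varphi^n|^p$ forward; this is exactly where the transfer operator and its iterates enter: $\|\pi(g)T^n\|^p = \|L^n(|g|^p)\|_\infty$ when $\varrho$ is cohomologous to a constant on cylinders, and in general $\|\pi(g)T^n\| = \||g|\cdot(\text{Radon--Nikodym factor})\|$ computed via $L^n$. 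Taking $n$-th roots and letting $n\to\infty$, the exponential growth rate of $\big\| L^n(\prod_{k=0}^{n-1}|a|^p\circ\varphi^k\cdot\varrho^{?})\big\|_\infty^{1/(pn)}$ is governed by the \emph{thermodynamic pressure} $P(\varphi,\, p\ln|a| + \ln\varrho)$, and the Ruelle--Walters variational principle gives
\[
P\big(\varphi, p\ln|a|+\ln\varrho\big) = \sup_{\mu\in\mathcal M_\varphi(X)} \Big( h_\varphi(\mu) + \int_X (p\ln|a| + \ln\varrho)\,d\mu \Big),
\]
and since $\sum_{x\in\varphi^{-1}(y)}\varrho(x)=1$ forces $\int\ln\varrho\,d\mu\le -h_\varphi(\mu)$ with the cylinder-measure normalization absorbing constants, one rearranges to $\tfrac1p P = \sup_\mu\big(\int\ln(|a|\varrho^{1/p})\,d\mu + h_\varphi(\mu)/p\big)$; the supremum over all invariant measures equals the supremum (indeed maximum, by upper semicontinuity of $\mu\mapsto h_\varphi(\mu)$ on the weak-$*$ compact set for the subshift) over ergodic measures by the ergodic decomposition and affinity of both $\mu\mapsto h_\varphi(\mu)$ and $\mu\mapsto\int g\,d\mu$. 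Exponentiating gives the claimed formula for $r(\pi(a)T)$.

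The main obstacle I expect is the precise bookkeeping in the norm identity $\|\pi(g)T^n\|_{B(L^p_\mu(X))} = \|L^n(|g|^p)^{1/p}\|_\infty$ (or its correct variant): one must verify that the operator norm of a weighted composition operator on $L^p_\mu(X)$ — with $\mu$ only $L^*$-invariant, not a nice product measure — is computed by the sup-norm of an iterate of the Ruelle operator applied to $|g|^p$, using the Radon--Nikodym identity $\varrho(\varphi_i^{-1}(x)) = \frac{d\mu\circ\varphi_i^{-1}}{d\mu}(x)$ from the proof of Proposition~\ref{prop:motivation} and a careful $L^p$--$L^q$ duality/Hölder argument at each level of cylinders. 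Once this identity is in hand, the passage to pressure is standard thermodynamic formalism on the full shift (where the potential $p\ln|a|+\ln\varrho$ is continuous, so Walters' variational principle applies without regularity hypotheses), and the reduction to ergodic measures and the disk shape are as sketched above.
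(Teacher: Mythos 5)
Your argument for the disk shape has a genuine gap. Rotational invariance of $\sigma(\pi(a)T)$ together with $0\in\sigma(\pi(a)T)$ only tells you that the spectrum is a rotation-invariant compact set containing $0$, i.e.\ a union of circles centred at the origin; it does not follow that it is a disk (the set $\{0\}\cup\T$ satisfies all the properties you establish). The missing ingredient is connectedness, and this is where most of the work in the paper's proof lies: assuming the spectrum misses some circle $|z|=1$ (after rescaling), one forms the Riesz projection $P=\frac{1}{2\pi i}\int_{\T}(\pi(a)T-z\mathds{1})^{-1}dz$, shows that $P$ commutes with $\pi(C(X))$ and that $\pi(a)$ is invertible on $\ker P$, invokes topological freeness of the Bernoulli shift to conclude that $\pi(C(X))$ is maximal abelian in $B(\pi(C(X)),T,S)\cong\OO_n^p$, hence $P=\pi(\mathds{1}_{Y_1})$ for a clopen $Y_1\subseteq X$, and finally derives $\varphi(Y_2)\subseteq Y_2$ for $Y_2=X\setminus Y_1$ from the covariance relations; since the only nonempty open set forward invariant under the full shift is $X$, this forces $Y_1=\emptyset$, a contradiction. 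None of this is present in your sketch, and without it the claim $\sigma(\pi(a)T)=\{z:|z|\le r(\pi(a)T)\}$ is unproved.

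Your treatment of the spectral radius is workable in outline but noticeably vaguer than it needs to be (the ``$\varrho^{?}$'' and the ``cohomologous to a constant'' caveat are signs the norm identity has not actually been pinned down). The paper's route avoids exactly the bookkeeping you worry about: from $\int_X|(\pi(a)T)^N f|^p\,d\mu=\int_X(\pi(|a|^p)T)^N|f|^p\,d\mu$ one gets $r(\pi(a)T)^p=r(\pi(|a|^p)T)$ with the latter acting on $L^1_\mu(X)$; by duality this equals $r(S|a|^p)$ on $L^\infty_\mu(X)$, which by positivity (norms of powers are attained at $\mathds{1}_X$) equals $r(L|a|^p)$ on $C(X)$; the variational formula for $r(L|a|^p)$ is then a citation to the Ruelle--Walters theory. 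In particular there is no need to ``rearrange'' the inequality $\int\ln\varrho\,d\mu\le-h_\varphi(\mu)$, which plays no role. Your reduction to the concrete model does require the spectral-permanence point you flag (the spectrum relative to $B(\pi(C(X)),T,S)$ versus $B(L^p_\mu(\Omega))$); it is good that you noticed it, but it should be justified rather than merely named.
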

\begin{rem} For $p=1$ the expression for $r(\pi(a)T)$ is nothing but the  topological pressure of $\varphi$ with the potential $\ln (|a|\varrho)$
For more about the history of such variational formulas and related results, see \cite{ABL}, \cite{bar-kwa}.
\end{rem}
\section{Description of covariant represenations for $p\neq 2$}\label{sec:two}

Assume that $p\in [1,\infty)\setminus\{2\}$ and let $(\pi,T,S)$ is a covariant representation on $L^p_{\mu}(\Omega)$ for a measure space $(\Omega,\Sigma,\mu)$. 
Without changing the Banach space $L^p_{\mu}(\Omega)$  we may assume that the measure space is \emph{localizable}, see \cite{Fremlin}, \cite{BKM}, \cite{cgt}. 
Then representation $\pi:C(X)\to B(L^p_{\mu}(\Omega))$ is necessarily given by \emph{multiplication operators}, 
see \cite[Theorem 2.12]{BKM}. That is,  a unital $*$-homomorphism $\pi_0:C(X)\to  L^{\infty}_{\mu}(\Omega)$ such that 
\begin{equation*}
(\pi(a)f)(\xi)=\pi_0(a)(\xi)f(\xi),\qquad   \xi \in L^p_{\mu}(\Omega), \,\, \omega\in \Omega.
\end{equation*}
We now describe the operators  $T$ and $S$ as weighted composition operators.

By the first relation in \eqref{eq:covariant_relation} we get  $ST=S\pi(1)T=\pi(L(1))=\pi(1)=1$. Since $S$ and $T$ are contractions, it follows that $T$ is an isometry. (In particular, $S$ and $T$ are
partial isometries in the sense of Mbekhta \cite{Mbekhta}.) By the  \emph{Banach-Lamperti Theorem}, see \cite[Theorem 3.1]{Lamperti},  $T$ is necessarily a (generalized) weighted composition operator of the form $hT_{\Phi}$. 
Namely, the formula  $\Phi(A):=\supp(T \mathds{1}_{A})$ defines a set  monomorphism $\Phi: \Sigma\to \Sigma$, i.e. identifying sets that differ by $\mu$-null sets, we have

\begin{enumerate}
\item[1)]\label{enu:set_morphism_axiom_a1} $A\cap B=\emptyset\Rightarrow \Phi(A)\cap\Phi(B)=\emptyset$,
\item[2)]\label{enu:set_morphism_axiom_a2} $\Phi\big(\bigsqcup_{n=1}^\infty A_n\big)=\bigsqcup_{n=1}^\infty\Phi(A_n)$,
\item[3)]\label{enu:set_morphism_axiom_a3} $\mu(A)=0\iff\mu(\Phi(A))=0$,
\end{enumerate}
where $A,B, A_n \in \Sigma$. 
Then  $\Phi(\Sigma)$ is a $\sigma$-algebra and  $\mu\circ\Phi^{-1}(\Phi(A)):=\mu(A)$  defines a measure on $\Phi(\Sigma)$ equivalent to $\mu|_{\Phi(\Sigma)}$. 
Any  set monomorphism gives rise to 
a linear operator $T_{\Phi}$ on the space of measurable functions where
\begin{equation*}
T_{\Phi}\mathds{1}_{A}=\mathds{1}_{\Phi(A)}, \qquad A\in \Sigma,
\end{equation*}
see \cite[Proposition 5.6]{PhLp1}. Moreover, there is a  measurable function   $h:\Omega\to\C$,  with $\supp(h)=\Phi(\Omega)$, such that $T=hT_{\Phi}$, i.e.
\begin{equation*}
T\xi (\omega) = h(\omega) T_{\Phi}(\xi)(\omega). \qquad  \xi \in L^p_{\mu}(\Omega), \,\, \omega\in \Omega.
\end{equation*}
Relation \eqref{eq:covariant_relation2} implies that $\Phi(\Omega)=\Omega$ and $\supp(h)=\Omega$. 
 The function $h$ satisfies 
\begin{equation*}
E^{\Phi(\Sigma)}(|h|^p)=\frac{d\mu\circ\Phi^{-1}}{d\mu},
\end{equation*}
 where $E^{\Phi(\Sigma)}$ denotes the conditional expectation with respect to the $\sigma$-algebra $\Phi(\Sigma)\subseteq \Sigma$. 
This condition characterises isometries among weighted composition operators. The  Radon-Nikodym derivative exists because the measure space is localizable. 

Now we turn to  description of $S$. Note that $TS$ is a contractive projection onto the range  of $T$, that we denote by $M\subseteq L^p_{\mu}(\Omega)$. 
The \emph{Ando Theorem} that  describes contractive projections on $L^p$-spaces, see \cite[Theorem 2]{Ando} which generalizes to arbitrary measures by \cite{Bernau_Lacey_Elton}. It implies that  
\begin{equation*}
TS \xi=\frac{h}{E^{\Phi(\Sigma)}(|h|^p)}E^{\Phi(\Sigma)}\left(\frac{|h|^{p}}{h} \xi\right)
\end{equation*}
(for $p=1$ we use the fact that $\Phi(\Omega)=\Omega$). 
Denoting by $T^{-1}:M\to L^p_{\mu}(\Omega)$ the inverse of $T: L^p_{\mu}(\Omega)\to M$, we have $S=T^{-1} TS$. 
Therefore
\begin{equation*}
S\xi =\frac{d\mu\circ\Phi}{d\mu} T_{\Phi^{-1}} E^{\Phi(\Sigma)}\left(\frac{|h|^{p}}{h} \xi\right).
\end{equation*}
\section{Proof of Theorem \ref{thm:first_result} }\label{sec:two_and_half}

To show that operators $T_{i}= \pi(\varrho^{-\frac{1}{p}}\mathds{1}_{X_i})T$ and $S_i= S_{\varphi}\pi(\varrho^{-\frac{1}{q}}\mathds{1}_{X_i})$, $i=1,...,n$ form a Cuntz family note that the relation $\sum_{i=1}^{n} T_{i}S_{i}=1$ is equivalent to~\eqref{eq:covariant_relation}. Furthermore, since $L(\varrho^{-1}\mathds{1}_{X_i})=1$, we get
\begin{equation*}
S_i T_i=S\pi(\varrho^{-\frac{1}{q}}\mathds{1}_{X_i})\pi(\varrho^{-\frac{1}{p}}\mathds{1}_{X_i})T=S\pi(\varrho^{-1}\mathds{1}_{X_i})T=L(\varrho^{-1}\mathds{1}_{X_i})=1.
\end{equation*} 
Using that $\pi(\mathds{1}_{X_i})T_j=\delta_{i,j}T_j$ and $\sum_{i=1}^nT_iS_i=1$ we obtain that 
\begin{equation*}
T_i S_i=\pi(\mathds{1}_{X_{i}}).
\end{equation*} 
Therefore $T_iS_i$ are hermitian operators by   \cite[Proposition 2.11]{BKM}.
Hence  $T_i,S_i$, $i=1,...,n$ is an $L^p$-Cuntz family and therefore $B(T_i,S_i: i=1,...,n)\cong\OO^p_n$.
 
It remains to prove that $B(S_i, T_i: i=1,...,n)=B(\pi(C(X)),T,S)$. The inclusion $\subseteq $ is obvious.  
We prove the reverse inclusion. Recall the notation \eqref{eq:basic_sets}.
Using the relation $Ta=(a\circ\varphi)T$, $a\in C(X)$, we get
\begin{equation*}
T_{i_1}T_{i_2}S_{i_2}S_{i_1}=T_{i_1}\pi(\mathds{1}_{X_{i_2}})S_{i_1}=\pi(\mathds{1}_{\varphi^{-1}(X_{i_2})})\pi(\mathds{1}_{X_{i_1}})=\pi(\mathds{1}_{X_{i_1 i_2}}).
\end{equation*} 
Proceeding by induction one gets that $T_{i_1}...T_{i_N}S_{i_N}...S_{i_1}=\pi(\mathds{1}_{X_{i_1...i_N}})$ for any $N\in\N$, $i_1,...,i_N\in\{1,...,n\}$. 
Since functions of the form $\mathds{1}_{X_{i_1...i_N}}$ are linearly dense in $C(X)$, we obtain that $\pi(C(X))\subseteq B(T_i,S_i: i=1,...,n)$. 
Thus we also get
\begin{equation*}
T=\pi(\varrho^{\frac{1}{p}})\sum_{i=1}^{n}T_i,\,\,  S=\sum_{i=1}^{n}S_i\pi(\varrho^{\frac{1}{q}})\in B(T_i,S_i: i=1,...,n).
\end{equation*} 
Therefore $B(\pi(C(X)),T,S)=B(T_i,S_i: i=1,...,n)\cong\OO^p_n$.
\begin{rem}\label{rem:p=2}
If $p=2$, so that $L^2_\mu(\Omega)$ is a Hilbert space, then  $S=T^*$ and  $S_i=T_i^*$ for $i=1,...,n$, by \cite[Corollary 3.3]{Mbekhta}. 
Thus the relations we consider become the standard ones used in the context of $C^*$-algebras. 
\end{rem}
\begin{rem}
Assume $p\neq 2$. It follows from the considerations in this and previous section, it follows that putting $\Omega_i=\supp(\pi_0(\mathds{1}_{X_i}))$, $i=1,...,n$,
 we get a decomposition $\Omega:=\bigsqcup_{i=1}^{n} \Omega_i$ of $\Omega$  and 
each $T_{i}:L^p_\mu(\Omega)\to L^p_\mu(\Omega_i)$ is an invertible isometry whose inverse is given by
$S_i$. Moreover, putting
\begin{equation*}
\Phi_{i}(A):=\Phi(A) \cap \Omega_{i}, \qquad A\in \Sigma,
\end{equation*}
$\Phi_{i}:\Sigma \to \Sigma$ is a set monomorphism  with $\Phi_{i}(\Sigma):=\{A\in \Sigma: A\subseteq \Omega_{i}\}$,
and  
\begin{equation*}
T_{i}=\pi(\varrho^{-\frac{1}{p}}) h T_{\Phi_i}, \qquad  S_{i}= T_{\Phi_i^{-1}}\pi(\varrho^{\frac{1}{p}}) h^{-1} \mathds{1}_{\Omega_i} 
\end{equation*}
are spatial partial isometries as introduced in \cite{PhLp1}.


\end{rem}

\section{Proof of Theorem \ref{thm:second_result}}\label{sec:three}

Let $(\pi, T,S)$ be a covariant representation of $L$ on the space  $L^p_\mu(\Omega)$ for a measure space $(\Omega,\Sigma,\mu)$. Let $a\in C(X)$. We  show first that the spectrum of $\pi(a)T$ is invariant under rotation around zero. As we proved in Theorem \ref{thm:first_result}, the algebra $B(\pi(C(X)),S,T)$ is isomorphic with $\OO_n^p$. Phillips showed in \cite[Proposition 5.9]{PhLp2a} that $\OO_n^p$ has a gauge circle action $\tau: \mathbb{T} \to \text{Aut}(\OO_n^p)$, which in therms of $\pi(C(X)),S,T$ is determined by the following relations  
\begin{equation*}
\tau_z|_{\pi(C(X))}=\text{id}|_{\pi(C(X))}, \quad \tau_z (T)=z T, \quad \tau_z (S)=z^{-1} S, \qquad z\in \mathbb{T}.
\end{equation*}
Now suppose that $\lambda$ is in the resolvent set $\C\setminus\sigma(\pi(a)T)$. Then for any $z\in \mathbb{T}$ the operator $\tau_z (\pi(a)T-\lambda\mathds{1})$ is invertible and since
\begin{equation*}
\tau_z(\pi(a)T-\lambda\mathds{1})=\pi(a)zT-\lambda\mathds{1}=z(\pi(a)T-z^{-1}\lambda \mathds{1})
\end{equation*}
it follows that $\pi(a)T-z^{-1}\lambda \mathds{1}$ is invertible. Therefore $z^{-1}\lambda\in\C\setminus\sigma(\pi(a)T)$, which means that $\sigma(\pi(a)T)$ is invariant under rotation around zero. It implies $\sigma(\pi(a)T)=\bigsqcup_{i\in I}D(r_i, R_i)$ where $D(r_i, R_i)=\{ z\in\C: r_i\leq |z|\leq R_i\}$. 
By  \cite[Proposition 3.3]{bar-kwa} (the argument works in $L^p$-case) the operator $\pi(a)T$ is not invertible  and so $0\in\sigma(\pi(a)T)$. 
To prove that $\sigma(\pi(a)T)$ is a disc it is sufficient to show that $\sigma(\pi(a)T)$ is connected. 
To do this we use Riesz projections.  Assume that on the contrary that $\sigma(\pi(a)T)$ contains 
$D(r,R)$ where $0<r\leq R$. By scaling $\pi(a)T$ if necessary, we may assume that $1<r$ and  $\sigma(\pi(a)T)\cap \mathbb{T}=\emptyset$. The \emph{Riesz projection} of $\pi(a)T$ corresponding to the unit disk is  given by 
\begin{equation*}
P=\frac{1}{2\pi i}\int_{\mathbb{T}}(\pi(a)T-z\mathds{1})^{-1}dz.
\end{equation*}
By construction $P$ is a  bounded idempotent, i.e. $P^2=P$, such that:
\begin{enumerate}
\item[1)]\label{enu:riesz_projection_commutes} $P$ commutes with $\pi(a)T$;
\item[2)]\label{enu:riesz_projection_range}  $\sigma(\pi(a)T|_{PL^{p}_{\mu}(\Omega)})= \sigma(\pi(a)T)\cap D(0,1)$ - the spectrum of  $\pi(a)T$ restricted 
to the range of $P$ is  the part of $\sigma(\pi(a)T)$ which lies in $D(0,1)$;
\item[3)]\label{enu:riesz_projection_range} 
$\sigma(\pi(a)T|_{(1-P)L^{p}_{\mu}(\Omega)})= \sigma(\pi(a)T)\setminus D(0,1)$ - the spectrum of $\pi(a)T$ restricted 
to the kernel of $P$ is the part of $\sigma(\pi(a)T)$ outside $D(0,1)$.
\end{enumerate}
Using that   $\pi(a)T$ is a weighted isometry  we get: 
\begin{lem}\label{lem"'Riesz_projector}
The above projection $P$  commutes with elements of $\pi(C(X))$  and the restriction $\pi(a)|_{\ker(P)}:\ker(P)\to\ker(P)$ is invertible.
\end{lem}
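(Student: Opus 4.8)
Write $W:=\pi(a)T$. After the scaling performed above we may assume $\sigma(W)\cap\T=\emptyset$, so $\sigma(W|_{\operatorname{ran}P})\subseteq\{|z|<1\}$ and $\sigma(W|_{\ker P})\subseteq\{|z|>1\}$; in particular $W|_{\ker P}$ is invertible, and $\operatorname{ran}P,\ker P\ne\{0\}$. The whole argument rests on the intertwining relation coming from $T\pi(b)=\pi(b\circ\varphi)T$: it gives $W\pi(b)=\pi(b\circ\varphi)W$, hence, inductively, $W^k\pi(b)=\pi(b\circ\varphi^k)W^k$; since $\pi$ is contractive and $\|b\circ\varphi^k\|_\infty\le\|b\|_\infty$,
\begin{equation*}
\|W^k\pi(b)\xi\|\le\|b\|_\infty\,\|W^k\xi\|,\qquad b\in C(X),\ \xi\in L^p_\mu(\Omega),\ k\in\N.
\end{equation*}

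The first step is to show $\operatorname{ran}P$ is invariant under every $\pi(b)$. For this I would prove the growth characterisation
\begin{equation*}
\operatorname{ran}P=\Big\{\xi\in L^p_\mu(\Omega):\ \limsup_{k\to\infty}\|W^k\xi\|^{1/k}<1\Big\}.
\end{equation*}
Inclusion ``$\subseteq$'' follows from $\|W^k\xi\|\le\|(W|_{\operatorname{ran}P})^k\|\,\|\xi\|$ and $\limsup_k\|(W|_{\operatorname{ran}P})^k\|^{1/k}=r(W|_{\operatorname{ran}P})<1$. For ``$\supseteq$'', if $(1-P)\xi\ne0$, then, $W|_{\ker P}$ being invertible with spectrum in $\{|z|>1\}$, one has $\|W^k(1-P)\xi\|\ge\|(W|_{\ker P})^{-k}\|^{-1}\|(1-P)\xi\|$ with $\|(W|_{\ker P})^{-k}\|^{-1/k}\to\min\{|z|:z\in\sigma(W|_{\ker P})\}>1$, while $\|W^kP\xi\|$ decays exponentially, so $\liminf_k\|W^k\xi\|^{1/k}>1$. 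By the displayed estimate this set is $\pi(b)$-invariant, i.e. $\pi(b)P=P\pi(b)P$ for all $b$.

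The complementary statement — that $\ker P$ is also $\pi(b)$-invariant (the two together being $P\in\pi(C(X))'$) — is the step I expect to be the main obstacle, since the ``forward'' estimate is no longer enough. Here I would use the backward description: $\xi\in\ker P$ iff there are $\eta_k$ with $W^k\eta_k=\xi$ and $\limsup_k\|\eta_k\|^{1/k}<1$ (one direction via $\eta_k:=(W|_{\ker P})^{-k}\xi$, the other via $P\xi=W^kP\eta_k$ and the exponential decay of $\|(W|_{\operatorname{ran}P})^k\|$). As the span of the cylinder functions $\mathds1_{X_{i_1\dots i_m}}$ is dense in $C(X)$ and $\ker P$ is closed, it suffices to transport such backward orbits through each $\pi(\mathds1_{X_\mu})$; for this one uses the identity $\pi(\mathds1_{X_\mu})W^{m}=\pi\!\big(\prod_{l=0}^{m-1}(a\varrho^{1/p})\!\circ\!\varphi^{l}\big)T_\mu$ for $|\mu|=m$ (a consequence of $\pi(a)T=\sum_i\pi(a\varrho^{1/p}\mathds1_{X_i})T_i$ together with $S_\mu T_\mu=1$, $T_\mu S_\mu=\pi(\mathds1_{X_\mu})$) in combination with $W^k\pi(b)=\pi(b\circ\varphi^k)W^k$. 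An alternative, and perhaps cleaner, route is to first treat the case where $a$ is nowhere zero: then $\pi(a)$ is invertible, $T=\pi(a^{-1})W$, and hence $T$, each $T_i=\pi(\varrho^{-1/p}\mathds1_{X_i})T$ and each $S_i$ commute with $P$; the general case follows by approximating $a$ uniformly by nowhere-zero functions $a_\varepsilon$, noting that $\sigma(\pi(a_\varepsilon)T)$ retains the same disconnection for $\varepsilon$ small and that the associated Riesz idempotents converge to $P$ in norm.

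Finally, granted $P\in\pi(C(X))'$, the subspace $\ker P$ is $\pi(a)$-invariant. To see $\pi(a)|_{\ker P}$ is invertible, recall $W|_{\ker P}=\pi(a)\,T|_{\ker P}$ is invertible, $T$ is an isometry, and $T$ commutes with $P$ (obtained above). Surjectivity is then $\ker P=W(\ker P)=\pi(a)\big(T(\ker P)\big)\subseteq\pi(a)(\ker P)\subseteq\ker P$. For injectivity, if $\xi\in\ker P$ with $\pi(a)\xi=0$, then applying $W=\pi(a)T$ and the relation $W\pi(a)=\pi(a\circ\varphi)W$ repeatedly gives $\pi(a\circ\varphi^k)(W^k\xi)=0$, i.e. $W^k\xi$ is supported on $\kappa^{-1}(\varphi^{-k}\{a=0\})$; since $\varphi$ is ergodic with respect to $\mu$ (which has full support), a nonzero such $\xi$ cannot satisfy the exponential lower bound $\|W^k\xi\|\gtrsim\big(\min|\sigma(W|_{\ker P})|\big)^{k}\|\xi\|$ forced by membership in $\ker P$, so $\xi=0$. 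Thus $\pi(a)|_{\ker P}$ is a bounded bijection of $\ker P$, hence invertible by the open mapping theorem.
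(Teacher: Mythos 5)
Your first half is sound and is the right starting point: writing $W:=\pi(a)T$, the intertwining $W^k\pi(b)=\pi(b\circ\varphi^k)W^k$ together with the growth characterisation of $\operatorname{ran}P$ correctly yields $\pi(b)(\operatorname{ran}P)\subseteq\operatorname{ran}P$, i.e.\ $(1-P)\pi(b)P=0$. But the lemma needs the other corner, $P\pi(b)(1-P)=0$, and this step --- the one you yourself flag as the main obstacle --- is not established by either of your two routes. In Route A the backward description of $\ker P$ is correct, but the covariance relation only lets you move a multiplication operator from the right of $W^k$ to the left (replacing $b$ by $b\circ\varphi^k$), never from left to right; so from $\xi=W^k\eta_k$ you can produce backward orbits for $\pi(b\circ\varphi^k)\xi$ (with the function changing with $k$), not for $\pi(b)\xi$, and the cylinder identity $\pi(\mathds{1}_{X_\mu})W^m=\pi(\cdots)T_\mu$ does not repair this: it expresses $\pi(\mathds{1}_{X_\mu})\xi$ as $\pi(c_\mu)T_\mu W^{k-m}\eta_k$, which is not of the form $W^j\zeta_j$. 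Route B is circular: from $T=\pi(a^{-1})W$ and $[P,W]=0$ you cannot conclude $[P,T]=0$ without already knowing $[P,\pi(a^{-1})]=0$, which is precisely what is being proved; the perturbation $a_\varepsilon\to a$ therefore reduces the problem to a case that has not been solved.

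The gap propagates into the second assertion: your surjectivity argument for $\pi(a)|_{\ker P}$ uses $T(\ker P)\subseteq\ker P$ (``$T$ commutes with $P$''), which was only ``obtained'' inside the flawed Route B, and your injectivity argument invokes an undefined spectral map $\kappa$ and ergodicity of $\mu$ in a way that is not available for an abstract covariant representation on an arbitrary $L^p_\mu(\Omega)$. For what it is worth, the paper itself does not prove the lemma but cites \cite[Lemma 6.3]{bar-kwa}, asserting that the $C^*$-argument carries over; whatever that argument is, it must use more than the one-sided estimate $\|W^k\pi(b)\xi\|\le\|b\|_\infty\|W^k\xi\|$ --- for instance, combining your two characterisations one can show $\|P\pi(b\circ\varphi^k)(1-P)\|\to0$ as $k\to\infty$, a genuinely dynamical input, and something of this kind is indispensable. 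As it stands your proposal proves only half of the required commutation relation.
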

\begin{proof}
The proof of \cite[Lemma 6.3]{bar-kwa} is valid in the $L^p$-case.
\end{proof}
Note that the Bernoulli shift $\varphi$ is \emph{topologically free}, i.e. the set $\{x\in X:\varphi^n(x)=x\text{ for some }n\in\N\}$ has an empty interior.
Hence using groupoid methods it can be shown that $\pi(C(X))$ is a maximal abelian subalgebra of $B(\pi(C(X)),T,S)$, see \cite{BKM2} or \cite{cgt}.
Combining this with Lemma \ref{lem"'Riesz_projector} we get that $P\in  \pi(C(X))$. 
Therefore $P= \pi(\mathds{1}_{Y_1})$ and $(1-P)=\pi(\mathds{1}_{Y_2})$ where $X=Y_1\sqcup Y_2$ is a decomposition into disjoint clopen sets. 
By our assumptions both $Y_1$ and $Y_2$ are non-empty ($P$ and $1-P$ are non-zero). 
By Lemma \ref{lem"'Riesz_projector}, $\pi(a\mathds{1}_{Y_2})$ is invertible as an element in $\B(\ker P)$.
This means that $a(y)\neq 0$ for $y\in Y_2$. Also since 
$(1-P)=\pi(\mathds{1}_{Y_2})$ commutes with $\pi(a)T$ and $\pi(\overline{a})$ it also commutes with $\pi(|a|^2)T$. Using this  we get
\begin{equation*}
\pi(L(|a|^2\mathds{1}_{Y_2}))=S \pi(|a|^2\mathds{1}_{Y_2}) T=S \pi(|a|^2) T \pi(\mathds{1}_{Y_2})=\pi(L(|a|^2)\mathds{1}_{Y_2})
\end{equation*}
Therefore $L(|a|^2\mathds{1}_{Y_2})(y)\neq 0$ only if $y \in Y_2$, which implies that $\varphi(Y_2)\subseteq Y_2$
But the only  non-empty open set forward invariant under the Bernoulli shift $\varphi$ is $X$. Hence $Y_2=X$ which leads to the contradiction with $Y_1\neq\emptyset$.

Thus we proved that the spectrum of $aT$ is a disc.  To complete the proof we compute the
spectral radius of $aT$. By Theorem \ref{thm:first_result}, $B(\pi(C(X)),T,S)$ does not depend on a choice of a covariant representation of $L$. In particular,
 we may assume that $T=T_\varphi$, $S=S_\varphi$ and $\pi$ are as in Proposition \ref{prop:motivation} for a probability Borel measure $\mu$ on $X$ with $L^*(\mu)=\mu$. 
Then for any $N\in\N$ and $f\in L^p_\mu(X)$ we have
\begin{equation*}
\int_X |(\pi(a)T)^Nf|^pd\mu=\int_X \prod_{i=0}^{N-1}|a\circ\varphi^i|^p|f\circ\varphi^N|^pd\mu=\int_X (\pi(|a|^p)T)^N|f|^pd\mu.
\end{equation*}
Hence $\|(\pi(a)T)^N\|^p_{L^p_\mu}=\|\pi(|a|^p)|T)^N\|_{L^1_\mu}$. Using the Gelfand formula for spectral radius we obtain 
\begin{equation*}
r(\pi(a)T)^p= r(\pi(|a|^p) T).
\end{equation*}
 Note that $S|a|^p:L^\infty_\mu(X)\to L^\infty_\mu(X)$ is adjoint to $\pi(|a|^p)T: L^1_\mu(X)\to L^1_\mu(X)$ and therefore their spectral radii coincide. 
$S|a|^p:L^\infty_\mu(X)\to L^\infty_\mu(X)$ is an extension of the weighted transfer operator $L|a|^p:C(X)\to C(X)$.
As these operators are positive their norms  (and the norms of their powers)  realize at the unit $\mathds{1}_{X}$. 
Thus their spectral radii also coincide. Concluding we get 
\begin{equation*}
r(\pi(a)T)^p=r(S|a|^p)=r(L|a|^p).
\end{equation*}
The following formula for the spectral radius of $L|a|^p:C(X)\to C(X)$
\begin{equation*}
r(L|a|^p)=\max_{\mu\in\Erg(X,\mu)}\exp\big(\int_{X}\ln(\varrho|a|^p) d\mu+h_{\varphi}(\mu).
\end{equation*}
goes back to the original work of Ruelle and was an inspiration for the general definition of topological pressure of Walters, see  \cite{ABL},  \cite[Theorem 13.1]{bar-kwa}.
Thus
\begin{equation*}
r(\pi(a)T)=r(L|a|^p)^{1/p}=\max_{\mu\in\Erg(X,\mu)}\exp\left( \int_X \ln (|a|\varrho^{\frac{1}{p}})d\mu + \frac{h_\varphi(\mu)}{p} \right).
\end{equation*}
This finishes the proof.

\bibliographystyle{spmpsci}

\begin{thebibliography}{10}
\providecommand{\url}[1]{{#1}}
\providecommand{\urlprefix}{URL }
\expandafter\ifx\csname urlstyle\endcsname\relax
  \providecommand{\doi}[1]{DOI~\discretionary{}{}{}#1}\else
  \providecommand{\doi}{DOI~\discretionary{}{}{}\begingroup
  \urlstyle{rm}\Url}\fi

\bibitem{Ando}
Ando T.: Contractive projections in {$L_p$} spaces.
\newblock Pac. J. Math. \textbf{17}(3), 390--405 (1966)

\bibitem{ABL}
Antonevich A., Bakhtin V., Lebedev A.: Ergodic theory and dynamical systems.
\newblock Ergod. Th. Dynam. Sys. \textbf{31}, 995--1042 (2011)

\bibitem{bar-kwa}
Bardadyn K., Kwa\'{s}niewski B.K.: Spectrum of weighted isometries:
  {$C^*$}-algebras, transfer operators and topological pressure.
\newblock Isr. J. Math. \textbf{246}(1), 149--210 (2021)

\bibitem{BKL}
Bardadyn K., Kwa\'sniewski B.K., Lebedev A.V.: {$C^*$}-algebras associated
  to transfer operators for countable-to-one maps (2023).
\newblock Preprint. arXiv:2202.03802

\bibitem{BKM}
Bardadyn K., Kwa\'sniewski B.K., McKee A.: Banach algebras associated to
  twisted \'etale groupoids: inverse semigroup disintegration and
  representations on ${L}^p$-spaces (2023).
\newblock Preprint. arXiv:2303.09997

\bibitem{BKM2}
Bardadyn K., Kwa\'sniewski B.K., McKee A.: Banach algebras associated to
  twisted \'etale groupoids {II}: simplicity and pure infiniteness (2024).
\newblock In preparation.

\bibitem{Bernau_Lacey_Elton}
Bernau S.~J., Lacey H.E.: The range of a contractive projection on an
  {$L_{p}$}-space.
\newblock Pacific J. Math. \textbf{53}, 21–41 (1974)

\bibitem{BJ}
Bratteli O., Jorgensen P.: Wavelets Through a Looking Glass: The World of the
  Spectrum, Appl. Numer. Harmon. Anal., vol.~2.
\newblock Birkhäuser Boston Inc., Boston, MA (2002)

\bibitem{cgt}
Choi Y., Gardella E., Thiel H.: Rigidity results for {$L^p$}-operator
  algebras and applications.
\newblock Preprint. arXiv: 1909.03612

\bibitem{Cuntz}
Cuntz J.: Simple {$C^*$}-algebras generated by isometries.
\newblock Comm. Math. Phys \textbf{57}, 173--185 (1977)

\bibitem{Exel_crossed}
Exel R.: A new look at the crossed-product of a {$C^\ast$}-algebra by an
  endomorphism.
\newblock Ergodic Theory Dynam. Systems \textbf{23}(6), 1733--1750 (2003)

\bibitem{Fremlin}
Fremlin D.H.: Measure theory. {V}ol. 2.
\newblock Torres Fremlin, Colchester (2003).
\newblock Broad foundations, Corrected second printing of the 2001 original

\bibitem{kwa1}
Kwa\'{s}niewski B.K.: Exel's crossed product and crossed products by
  completely positive maps

\bibitem{Lamperti}
Lamperti J.: On the isometries of certain function-spaces.
\newblock Pac. J. Math. \textbf{8}(3), 459--466 (1958)

\bibitem{Mbekhta}
Mbekhta M.: Partial isometries and generalized inverses.
\newblock Acta Sci. Math. (Szeged) \textbf{70}(3-4), 767--781 (2004)

\bibitem{PhLp1}
Phillips N.C.: Analogs of {C}untz algebras on {$L^p$}-spaces.
\newblock Preprint. arXiv: 1201.4196

\bibitem{PhLp2a}
Phillips N.C.: Simplicity of {UHF} and {C}untz algebras on {$L^p$}-spaces.
\newblock Preprint. arXiv: 1309.0115

\end{thebibliography}

\end{document}